\DeclareMathOperator*{\argmin}{\arg\!\min}
\theoremstyle{plain} 
\newtheorem{proposition}{Proposition}[section]
\theoremstyle{remark}
\newtheorem{remark}{Remark}[section]
\newtheorem{example}{Example}[section]
\theoremstyle{definition}
\newtheorem{definition}{Definition}
\theoremstyle{plain} 
\newtheorem{lemma}{Lemma}[section]
\theoremstyle{plain}
\newtheorem{corollary}{Corollary}[section]
\title{A novel approach to find the minimum of real functions and an anomalous test function}
\author{Eddie Conti}
\date{July 2023}
\begin{document}
\maketitle

\begin{abstract} 
\noindent
    The aim of this paper is to present an original approach that takes advantage from the geometric features of strictly convex functions to tackle the problem of finding the minimum from another perspective. The general idea is that near the point of minimum, the function is "v-shaped" and so we can reduce the interval where the minimum lies by finding the intersection between the function and a proper horizontal line whose levels decrease step by step. This idea, under some appropriate assumptions, led us to formalise an algorithm that is able to determine the minimum point sought. Furthermore, we see that this approach can be generalized to a wider class of functions. In the last part of this paper we provide the construction of an anomalous function for which the algorithm cannot be used. 
\end{abstract}

\section{A geometric approach to find the minimum point}
Let us consider a function $f \colon \Omega \to \mathbb{R}$, where $\Omega\subset \mathbb{R}$ and $\Omega$ is connected. Let us state the first result which will be useful in the following pages.
\begin{lemma} \label{Ilmiolemma}
    Let us consider $f\colon \Omega \to \mathbb{R}$ a non constant convex function. There cannot exist two intervals of the form $(x_1,x_2)$, $(x_3,x_4)$ with $x_1< x_2\leq x_3< x_4$ such that the function has a change of monotonicity in each interval.
\end{lemma}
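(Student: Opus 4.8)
The plan is to reduce everything to the single structural fact underlying convexity: the monotonicity of difference quotients. Concretely, I would first record the three-chord inequality, namely that for any $a<b<c$ in $\Omega$ one has $\frac{f(b)-f(a)}{b-a}\le\frac{f(c)-f(b)}{c-b}$, together with its immediate consequence that the difference quotient is monotone \emph{across disjoint pairs}: whenever $s_1<t_1\le s_2<t_2$ lie in $\Omega$, chaining the three-chord inequality (first over $s_1<t_1<t_2$, then over $t_1\le s_2<t_2$) yields $\frac{f(t_1)-f(s_1)}{t_1-s_1}\le\frac{f(t_2)-f(s_2)}{t_2-s_2}$. This one inequality is the engine of the whole argument, and it needs no differentiability.

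Next I would make precise what a \emph{change of monotonicity} inside an interval buys us. Reading it as the assertion that $f$ is not monotone on the interval, a change of monotonicity in $(x_1,x_2)$ means that $f$ strictly increases on some sub-arc and strictly decreases on some other sub-arc; in particular there exist points $s_1<t_1$ in $(x_1,x_2)$ with $f(t_1)-f(s_1)>0$. The same reasoning applied to $(x_3,x_4)$ produces a pair $s_2<t_2$ in $(x_3,x_4)$ with $f(t_2)-f(s_2)<0$ (the decreasing sub-arc of the second interval). Note that this extraction is insensitive to whether the change is a "peak" or a "valley": either way the first interval furnishes a strictly positive difference quotient and the second a strictly negative one.

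Now I would exploit the ordering $x_1<x_2\le x_3<x_4$. Since the chosen points lie in the open intervals, we have $x_1<s_1<t_1<x_2\le x_3<s_2<t_2<x_4$, hence $s_1<t_1\le s_2<t_2$ with strict separation. Applying the cross-pair monotonicity of difference quotients to these four points forces $\frac{f(t_1)-f(s_1)}{t_1-s_1}\le\frac{f(t_2)-f(s_2)}{t_2-s_2}$, i.e. a strictly positive quantity bounded above by a strictly negative one — a contradiction. Therefore the two intervals cannot coexist, which is the claim.

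I expect the only delicate step to be the second one: converting the informal phrase \emph{change of monotonicity} into honest points carrying strict increments, taking care of flat stretches and of whether an extremum is attained at an endpoint of a sub-arc. Once one agrees that a change of monotonicity means the presence of both a strictly increasing and a strictly decreasing sub-arc (equivalently, difference quotients of both signs), everything else is the mechanical chaining above. If one is moreover willing to assume $f$ differentiable, the argument collapses further: a change of monotonicity is then a sign change of $f'$, and since convexity makes $f'$ non-decreasing, $f'$ can change sign at most once, which already forbids two separated changes.
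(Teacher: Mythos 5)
Your proof is correct, and although it runs on the same engine as the paper's argument — monotonicity of difference quotients of a convex function — it is organized quite differently, and the difference matters. The paper fixes the single point $x_1$ and compares the slopes $R(x_1,r)$, $R(x_1,s)$, $R(x_1,t)$ for a valley $r<s<t$ inside the second interval $(x_3,x_4)$; the change of monotonicity inside the first interval is never actually used, only the existence of the point $x_1$ to its left. That cannot suffice, since a convex function may perfectly well have a valley to the right of some point of its domain: take $f(x)=x^2$, $x_1=-1$, with the valley in $(-1/2,1/2)$. Indeed the paper's first displayed inequality $R(x_1,r)>R(x_1,s)$ fails in that example (with $r=-0.4$, $s=0$ one gets $R(x_1,r)=-1.4<-1=R(x_1,s)$); that inequality is only guaranteed when $f(r)\ge f(x_1)$, which is not justified there. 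Your argument avoids this entirely: you extract a strictly increasing chord $s_1<t_1$ from the first interval and a strictly decreasing chord $s_2<t_2$ from the second — both exist precisely because each interval hosts a change of monotonicity — and then the cross-pair slope inequality
\[
\frac{f(t_1)-f(s_1)}{t_1-s_1}\le\frac{f(t_2)-f(s_2)}{t_2-s_2}
\]
pins a strictly positive number below a strictly negative one. This genuinely uses both hypotheses, treats peaks and valleys symmetrically with no case analysis or ``without loss of generality,'' and in fact repairs the gap in the paper's own proof. The one step to write out carefully is the chaining that establishes the cross-pair inequality: you need the full three-chord statement, namely that for $a<b<c$ one has $\frac{f(b)-f(a)}{b-a}\le\frac{f(c)-f(a)}{c-a}\le\frac{f(c)-f(b)}{c-b}$, applied twice across $t_1$ (or across $t_1$ and $s_2$ when $t_1<s_2$); but that is standard and needs no differentiability, exactly as you say.
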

\begin{proof}
    Recall that for a convex function, the function
    \[R(x,y)=\frac{f(y)-f(x)}{y-x}\]
    is monotonically non-decreasing in $y$, for every $x$ fixed, or vice versa. In light of the above, fix $x_1$ and consider with no loss of generality $3$ points $r<s<t$ in $(x_3,x_4)$ such that $f(r)>f(s)$ and $f(s)<f(t)$ (the case where the function first increases and then decreases is analogous). Let be $s$ the point where we have a change of monotonicity. In our scenario, we obtain
    \[R(x_1,r)=\frac{f(r)-f(x_1)}{r-x_1}>\frac{f(s)-f(x_1)}{s-x_1}=R(x_1,s)\]
    and 
    \[R(x_1,s)=\frac{f(s)-f(x_1)}{s-x_1}<\frac{f(t)-f(x_1)}{s-x_1}=R(x_1,t)\cdot \frac{t-x_1}{s-x_1}.\]
    Since $t$ can be arbitrarily close to $s$ and $f(s)<f(t)$, and since the equality cannot occur
    \[
    R(x_1,s)< R(x_1,t).
    \]
    which contradicts the monotonicity of $R(x,y)$.
\end{proof} \noindent
In other terms if the function $f$ is convex then has at most one monotonicity change. This, of course, applies also to strictly convex functions. The next result captures the geometric features of strictly convex functions.

\begin{proposition} \label{proposizmia}
    Let $f \colon \Omega \to \mathbb{R}$ a continuous convex coercive function. Then $f$ admits a global minimum $y^*$. If furthermore $f$ is strictly convex, then every horizontal line of the form $h(x)=y$, where $y\in (y^*,+\infty)$, intersects the graph of $f(x)$ in exactly $2$ points. 
\end{proposition}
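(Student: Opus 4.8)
The plan is to treat the two assertions separately: first the existence of a global minimum under the weaker hypotheses (continuity, convexity, coercivity), and then the two-point intersection property once strict convexity is added. For existence I would reduce to a compact problem. Fixing any $x_0\in\Omega$ and setting $c=f(x_0)$, the sublevel set $K=\{x\in\Omega : f(x)\le c\}$ is nonempty; coercivity guarantees that $f\to+\infty$ as $x$ approaches the extremities of $\Omega$, so $K$ stays bounded away from the ends of $\Omega$ and is therefore a compact subset of $\Omega$. Continuity of $f$ on the compact $K$ then yields, by the Weierstrass theorem, a point where $f$ attains its minimum over $K$; since $f>c$ off $K$, this is a global minimum. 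I will denote by $y^{*}$ the corresponding minimal value.

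Now assume $f$ strictly convex and fix a minimiser $x^{*}$ with $f(x^{*})=y^{*}$; strict convexity makes $x^{*}$ unique. The key structural fact I would establish is that $f$ is strictly decreasing on $\Omega\cap(-\infty,x^{*}]$ and strictly increasing on $\Omega\cap[x^{*},+\infty)$. The monotone (non-strict) behaviour on each side, together with the fact that there is a single monotonicity change, is exactly what Lemma~\ref{Ilmiolemma} provides; to upgrade it to strictness I would argue directly from strict convexity: for $x^{*}<a<b$ one writes $a$ as a proper convex combination $a=\lambda x^{*}+(1-\lambda)b$ with $\lambda\in(0,1)$, whence
\[
f(a)<\lambda f(x^{*})+(1-\lambda)f(b)\le f(b),
\]
and symmetrically on the left branch.

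With this in hand the counting is immediate. Fix any level $y>y^{*}$. On the right branch $f$ increases continuously from $f(x^{*})=y^{*}$ and, by coercivity, tends to $+\infty$, so by the intermediate value theorem it takes the value $y$ at least once, and by strict monotonicity exactly once, at some $x_R>x^{*}$; the same argument on the left branch produces a unique $x_L<x^{*}$. These two points are distinct, and no further solution exists because the two branches exhaust $\Omega$ and meet only at $x^{*}$, where $f=y^{*}\ne y$. Hence the line $h(x)=y$ meets the graph in exactly two points.

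The main obstacle I anticipate is not the counting but the rigour of the monotonicity and range step: one must make sure that each branch actually sweeps the whole interval $[y^{*},+\infty)$ (this is where coercivity, and not merely convexity, is essential) and that the strict inequalities survive at the extremities of $\Omega$, whether $\Omega$ is open, half-open, bounded or unbounded. Checking these boundary configurations, rather than the interior argument, is the delicate part.
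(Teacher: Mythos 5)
Your proof is correct, and it splits into one half that matches the paper and one that genuinely departs from it. The existence half is the same argument as the paper's: compactness of a sublevel set plus Weierstrass (the paper spells out the closedness of the sublevel set via continuity, which you gloss over, but that is a one-line fix). The intersection half is where you diverge. The paper counts intersections: at least two exist by the intermediate value theorem and coercivity on both tails, and a third is impossible because three solutions $x_1<x_2<x_3$ would force a monotonicity change in each of $(x_1,x_2)$ and $(x_2,x_3)$, contradicting Lemma~\ref{Ilmiolemma}. You instead prove directly from strict convexity that $f$ is strictly decreasing on $\Omega\cap(-\infty,x^*]$ and strictly increasing on $\Omega\cap[x^*,+\infty)$, via $f(a)<\lambda f(x^*)+(1-\lambda)f(b)\le f(b)$, and then get existence and uniqueness of exactly one solution per branch from the intermediate value theorem plus strict monotonicity. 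Your route is more self-contained and arguably more robust: it does not lean on Lemma~\ref{Ilmiolemma} (whose own proof is the shakiest part of the paper), it rules out cleanly the degenerate possibility of the line meeting the graph along a segment (the paper's monotonicity-change argument leaves this implicit, though strict convexity forbids it), and it delivers as a byproduct that the two solutions straddle $x^*$ — precisely the fact the algorithm in Section~2 exploits when it shrinks the interval. What the paper's route buys is brevity and reuse of a lemma it has already paid for. Your closing concern about the extremities of $\Omega$ is legitimate but is really a defect of the paper's framing: coercivity as written ($f(x_n)\to\infty$ when $|x_n|\to\infty$) only makes sense for unbounded $\Omega$, and your reading of it as divergence at the extremities of $\Omega$ is the natural repair.
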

\begin{proof}
First of all let us prove that the function $f$ admits a global minimum in $\Omega$. Let us consider, for every $\lambda \in \mathbb{R}$, the $\lambda-$sublevel 
\[L_{\lambda}(f)=\{x\in \Omega\,|\, f(x)\leq \lambda\},\]
our aim is to prove that this set is compact. $L_{\lambda}(f)$ is clearly closed as a consequence of the the Theorem of sign permanence: if we consider $\bar{x} \notin L_{\lambda}(f)$, then $f(\bar{x})>\lambda$ and so the function $g(x)=f(x)-\lambda$ satisfies $g(\bar{x})>0$. From the continuity of $g(x)$ there exists a neighbourhood $U$ of $\bar{x}$ such that $g(x)>0$ for any $x \in U$. This is equivalent to $f(x)>\lambda$ for any $x \in U$. To complete the first part of the proposition we have to prove that $L_{\lambda}(f)$ is limited. If so, the $\lambda-$sublevel will be compact and so the function $f$ admits a global minimum in $L_{\lambda}(f)$ and so in $\Omega$. \\
Suppose by contradiction that $L_{\lambda}(f)$ is not limited, therefore there exists a sequence $x_n \in L_{\lambda}(f)$ such that $|x_n| \to \infty$ for $n \to \infty$. Hence, 
\[ \lim_{n\to \infty} f(x_n)\leq \lambda \]
which contradicts the hypothesis of coercivity of $f$. \\ 
To conclude the proof, let us consider $h(x)=y$ with $y\in (y^*,+\infty)$ and  prove that $h(x)$ intersects $f(x)$ in exactly $2$ points. By continuity and since 
\[\lim_{x  \to \infty} f(x)=+\infty\]
the function reaches all values 
in $(y^*,+\infty)$. Therefore there is at least one intersection. However, by the coercivity, also holds 
\[\lim_{x \to -\infty} f(x)=+\infty\]
and so there must be another intersection, but since the function is strictly convex in $\Omega$, another intersection is not allowed. Indeed, if we denote with $x_1,x_2,x_3$ the three intersections, then the function presents a change of monotonicity in both $(x_1,x_2)$ and $(x_2,x_3)$, which is in contrast with Lemma \ref{Ilmiolemma}. 
\end{proof}

\begin{remark}
The second part of the proposition requires that $f$ is strictly convex. This hypothesis is important because otherwise we can easily find functions for which our approach does not hold. For instance, if we consider $f(x)=k$ a constant function, we are not able to find the two intersections with an horizontal line. \\
In addition, the previous result can be generalized if we work locally, with the effect of lightening the assumptions. We will formalize this in the next pages.
\end{remark} 
\noindent
Proposition \ref{proposizmia} holds also for strongly convex functions in view of the following lemma and recalling that a strongly convex function is also strictly convex:
\begin{lemma}
    If $f$ is a strongly convex function on $\mathbb{R}^n$ then $f$ is coercive. Furthermore, if $g\colon \mathbb{R}^n \to \mathbb{R}$ is a differentiable convex function then for every $\epsilon>0$, $g(x)+\epsilon \lVert x\rVert^2$ is coercive. 
\end{lemma}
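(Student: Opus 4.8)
The plan is to reduce both statements to the elementary fact that a quadratic growth term dominates any affine term at infinity, and to supply the needed affine minorant from convexity.

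First I would fix the meaning of strong convexity: $f$ is strongly convex with modulus $m>0$ exactly when $h(x):=f(x)-\frac{m}{2}\lVert x\rVert^2$ is convex. The key ingredient I would invoke is that a finite convex function $h\colon\mathbb{R}^n\to\mathbb{R}$ admits a global affine minorant, i.e. there exist $b\in\mathbb{R}^n$ and $a\in\mathbb{R}$ with $h(x)\geq a+\langle b,x\rangle$ for every $x$. This follows by taking any subgradient of $h$ at a fixed base point $x_0$ (a real-valued convex function on all of $\mathbb{R}^n$ is continuous and subdifferentiable everywhere). Granting this, I would write
\[
f(x)=h(x)+\frac{m}{2}\lVert x\rVert^2\geq \frac{m}{2}\lVert x\rVert^2+\langle b,x\rangle+a\geq \frac{m}{2}\lVert x\rVert^2-\lVert b\rVert\,\lVert x\rVert+a,
\]
using the Cauchy--Schwarz inequality. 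The right-hand side is a quadratic in $\lVert x\rVert$ with positive leading coefficient, hence tends to $+\infty$ as $\lVert x\rVert\to\infty$, which is precisely the coercivity of $f$.

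For the second statement I would proceed identically, but extract the affine minorant directly from differentiability, avoiding any subdifferential machinery. Since $g$ is differentiable and convex, the first-order characterization of convexity gives $g(x)\geq g(0)+\langle \nabla g(0),x\rangle$ for every $x$. Hence for any $\epsilon>0$,
\[
g(x)+\epsilon\lVert x\rVert^2\geq \epsilon\lVert x\rVert^2+\langle \nabla g(0),x\rangle+g(0)\geq \epsilon\lVert x\rVert^2-\lVert \nabla g(0)\rVert\,\lVert x\rVert+g(0),
\]
and again the quadratic term dominates, giving coercivity. One can also observe that $g(x)+\epsilon\lVert x\rVert^2$ is itself strongly convex with modulus $2\epsilon$, so this is really a corollary of the first part; but the direct estimate is cleaner and self-contained.

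The only genuine obstacle is justifying the existence of the affine minorant in the first part \emph{without} assuming differentiability. I expect to handle this by citing the standard result that a real-valued convex function on all of $\mathbb{R}^n$ is continuous and has a nonempty subdifferential at every point; the supporting affine function at any chosen base point then serves as the minorant. Everything else is the routine observation that a strictly positive quadratic eventually dominates a linear term.
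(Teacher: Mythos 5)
Your proposal is correct, and its second half coincides with the paper's argument: both extract the affine minorant $g(x)\geq g(0)+\langle\nabla g(0),x\rangle$ from the first-order convexity inequality at $0$ and then let the quadratic term dominate. The first half, however, takes a genuinely different route. The paper \emph{defines} strong convexity by the inequality $f(y)\geq f(x)+\nabla f(x)^T(y-x)+\frac{m}{2}\lVert x-y\rVert^2$, which tacitly presupposes that $f$ is differentiable; the affine minorant is then handed to it for free, and the domination step is merely asserted (the quadratic term ``determines the behavior'' of the right-hand side). You instead start from the differentiability-free definition (that $f-\frac{m}{2}\lVert\cdot\rVert^2$ is convex), manufacture the affine minorant by invoking subdifferentiability of finite convex functions on $\mathbb{R}^n$, and make the domination rigorous via Cauchy--Schwarz, reducing everything to $\frac{m}{2}\lVert x\rVert^2-\lVert b\rVert\,\lVert x\rVert+a\to+\infty$. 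This buys two things: the first statement now covers nondifferentiable strongly convex functions, and the ``quadratic beats linear'' step is an actual estimate rather than an appeal to intuition (in the paper's version the linear term $\nabla f(x)^T(y_n-x)$ can be negative and unbounded, so this estimate is what the argument really rests on). The price is heavier machinery, namely the subgradient existence theorem, whereas the paper stays entirely within elementary calculus given its stronger definition. Your closing observation that $g+\epsilon\lVert\cdot\rVert^2$ is itself strongly convex with modulus $2\epsilon$, so that the second statement is a corollary of the first, is also absent from the paper, which proves the two parts independently.
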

\begin{proof}
We recall that $f$ is a strongly convex function if for every $x,y \in \mathbb{R}^n$, 
\[ f(y)\geq f(x)+\nabla f(x)^T(y-x)+\frac{m}{2}\lVert x-y\rVert^2 \]
for a certain positive real number $m$. Let us fix $x$ and consider $y_n$ such that $\lVert y_n\rVert \to \infty$ as $n \to \infty$. Therefore,
\[ f(y_n) \geq f(x)+\nabla f(x)^T(y_n-x)+\frac{m}{2}\lVert x-y_n\rVert^2.\]
We observe that $f(x)$ and $\nabla f(x)$ are fixed numbers and $m/2>0$, as a consequence, the term $\lVert x-y_n\rVert^2$ determines the behavior of the right-hand side. Since $\lVert x-y_n\rVert^2\to \infty$ we conclude that $f(y_n)\to \infty$. Hence, the function $f$ is coercive. \\
To prove the second part, we remind that since $g$ is differentiable, for any $x,y \in \mathbb{R}^n$
\[ g(y)\geq g(x)+\nabla g(x)^T(y-x). \]
If we choose $x=0$ we get that $g(y)\geq g(0)+\nabla g(0)^T(y)$, hence
\[g(y)+\epsilon  \lVert y\rVert^2\geq g(0)+\nabla g(0)^T(y)+\epsilon  \lVert y\rVert^2\]
and the right-hand side diverges to $\infty$ as $\lVert y\rVert\to \infty$.
\end{proof}
\noindent
The second part of the lemma, essentially states how the notion of coercivity and strong convexity are related. \\
Let us introduce the following notion, which will be very useful afterwards. 
\begin{definition}
    Let us consider a continuous function $f \colon \Omega \to \mathbb{R}$. We say that $f$ is locally coercive at $x^*$ if there exists a neighborhood $(a,b)$ of $x^*$ such that 
    \[ \lim_{x\to a^+} f(x)=\lim_{x\to b^-} f(x)=L \in \mathbb{R}.\]
\end{definition}
\noindent
The previous definition is motivated by the following result:
\begin{proposition} \label{proposizionemiacorc}
    Let us consider a strictly convex function $f \colon \Omega \to \mathbb{R}$ which admits a minimum at $x^*$, then $f$ is continuous and $f$ is locally coercive at $x^*$.
\end{proposition}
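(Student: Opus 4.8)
The plan is to establish the two assertions separately, treating continuity first since the construction witnessing local coercivity will rely on it. Both parts exploit the monotonicity of the difference quotient $R(x,y)$ recalled in the proof of Lemma \ref{Ilmiolemma}, together with the fact that a strictly convex function has a \emph{unique} minimizer, with a strictly decreasing branch to the left of $x^*$ and a strictly increasing branch to its right (this follows by writing an intermediate point as a convex combination and using the strict inequality together with $f(x^*)\le f(\,\cdot\,)$).

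For continuity I would work at an interior point $x_0\in\Omega$ and pin the function between two fixed chords. Choosing $u,v\in\Omega$ with $u<x_0<v$, the monotonicity of $t\mapsto R(x_0,t)$ gives, for every $x$ with $x_0<x<v$,
\[ m \;:=\; \frac{f(x_0)-f(u)}{x_0-u}\;\le\;\frac{f(x)-f(x_0)}{x-x_0}\;\le\;\frac{f(v)-f(x_0)}{v-x_0}\;=:\;M, \]
so that $m(x-x_0)\le f(x)-f(x_0)\le M(x-x_0)$. Letting $x\to x_0^+$ forces $f(x)\to f(x_0)$, and the symmetric choice of points handles the limit $x\to x_0^-$; hence $f$ is continuous at every interior point of $\Omega$.

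For local coercivity I would first use strict convexity to note that any point $x_1>x^*$ in $\Omega$ satisfies $f(x_1)>f(x^*)$, and likewise on the left. Consequently the suprema $S_L,S_R$ of $f$ along the left and right branches both strictly exceed $f(x^*)$. Picking any level $L$ with $f(x^*)<L<\min(S_L,S_R)$ and applying the intermediate value theorem (legitimate by the continuity just proved) on each branch produces points $a<x^*<b$ with $f(a)=f(b)=L$. Since $a$ and $b$ are interior, continuity yields $\lim_{x\to a^+}f(x)=f(a)=L=f(b)=\lim_{x\to b^-}f(x)$, so $(a,b)$ is the required neighborhood and $f$ is locally coercive at $x^*$.

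The main obstacle is not any single estimate but the boundary behavior hidden in the statement. Convex functions need not be continuous at an endpoint of a closed domain, and if $x^*$ happened to be a boundary point of $\Omega$ then no neighborhood $(a,b)\subset\Omega$ would exist and the definition of local coercivity could not even be tested; thus the clean statement really presumes that the minimizer is interior (as it is, for instance, whenever $\Omega$ is open). I would therefore make this assumption explicit, after which the only remaining care is to check that the chosen level $L$ is genuinely attained on both sides of $x^*$ — which is exactly what strict convexity together with interiority guarantees.
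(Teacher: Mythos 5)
Your proof is correct and, at its core, follows the same route as the paper's: continuity is deduced from convexity, and local coercivity is obtained by producing two points at a common level on either side of $x^*$ via the intermediate value theorem. The differences are in rigor, and they favor your version. For continuity the paper simply invokes the fact that convex functions are locally Lipschitz, while you re-derive continuity at interior points from the monotonicity of the difference quotient $R$; this keeps the argument self-contained and makes explicit that it only works at interior points. For local coercivity the paper chooses $a<x^*<b$ with $f(a),f(b)>f(x^*)$ and handles the matching of the two levels with the phrase ``unless we rename $a$ and $b$''; your device of fixing one common level $L<\min(S_L,S_R)$ and solving $f=L$ on each branch is exactly the step that phrase was gesturing at, done properly. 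Finally, your observation that the statement tacitly assumes $x^*$ is interior to $\Omega$ is a genuine correction to the paper: for $f(x)=x^2$ on $\Omega=[0,1]$ the minimizer is the endpoint $0$, no neighborhood $(a,b)\subset\Omega$ of $x^*$ exists, and local coercivity cannot even be formulated; moreover a strictly convex function on a closed interval may jump up at a non-optimal endpoint, so the unqualified continuity claim fails there as well.

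One residual caution on your coercivity step: the suprema $S_L,S_R$ must be taken over the \emph{interior} of each branch. Convexity permits $f$ to jump up at an endpoint belonging to $\Omega$; for instance $f(x)=(x-1/2)^2$ on $(0,1)$, redefined to equal $10$ at $x=0$ and $x=1$, is still strictly convex with interior minimizer $x^*=1/2$. There $S_L=S_R=10$, yet the level $L=5$ is attained nowhere: the only points with value above $5$ are the two discontinuity points, and the IVT does not apply on an interval reaching them. Taking the suprema over interior points only (automatic when $\Omega$ is open, as in your parenthetical remark) removes the problem, and the rest of your argument goes through verbatim.
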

\begin{proof}
    First of all, let us prove the continuity of $f$. Recalling that a convex function $f$ is locally Lipschitz, then we conclude the continuity of $f$. \\
    For the second part, since $f$ is continuous and $x^*$ is a global minimum (since strictly convex functions have unique global solutions), there exists a neighborhood $B(x^*,\delta)$ such that $f(x)>f(x^*)$ for every $x\in B(x^*,\delta)$. Let us consider $b\in U(x^*,\delta)$ such that $b>x^*$ and $f(b)>f(x^*)$, by the continuity of the function, $f$ takes every value between $[f(x^*),f(b)]$. Furthermore, there exists $a<x^*$ such that $f(a)>f(x^*)$ and the function $f$ attains each value in $[f(x^*),f(a)]$. Therefore, unless we rename $a$ and $b$ it is possible to choose a neighborhood $(a,b)$ such that $f$ is locally coercive.
\end{proof}

\begin{remark}
    The previous proposition holds also if $f$ is strictly convex in a neighborhood  $B(x^*,\delta)$ of the minimum point. As a consequence, proposition \ref{proposizmia} can be adapted to work in this case. We have that by continuity and local coercivity, there exists an $\bar{y}$ such that $h(x)=y$ where $y\in (y^*,\bar{y})$ intersects the graph of $f(x)$ in $2$ points. In other terms we have rewritten in local terms the content of Proposition \ref{proposizmia}
\end{remark}
\noindent
We are almost ready to present the idea of the algorithm for the minimum. Let us introduce the following definition.
\begin{definition} \label{definizionesimm}
    Let us assume that $f \colon \Omega \to \mathbb{R}$ is a continuous strictly convex coercive function. Let us denote $x_{y,i}$ for $i=1,2$ the two points in $\Omega$ such that $f(x_{y,i})=y$ for any given $y\in (y^*,+\infty)$. We will call the \textit{symmetry curve of f} the curve $S_f$ whose graph is defined as
\begin{equation} \label{symmetrycurve}
    G(S_f):=\Bigl\{\Bigl(\frac{x_{y,1}+x_{y,2}}{2},y\Bigr)\,|\, y\in (y^*,+\infty)\Bigr\}.  
\end{equation}
\end{definition} \noindent
\begin{lemma} \label{lemma_Sf}
The symmetry curve of $f$ is defined in $(y^*,+\infty)$ and can be extended to $[y^*,+\infty)$ by defining
\[ x^*=\argmin_{x\in \Omega} f(x),\]
since $(x^*,y^*)\in G(S_f)$.
In other words, $S_f$ intersects $f$ at exactly one point, which is the minimum point. 
\end{lemma}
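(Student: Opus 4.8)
The plan is to split the statement into three parts: well-definedness of the midpoint map on $(y^*,+\infty)$, the continuous extension to $y=y^*$ with limiting value $x^*$, and the uniqueness of the intersection of $G(S_f)$ with the graph of $f$. Well-definedness is immediate from Proposition \ref{proposizmia}: for each $y \in (y^*,+\infty)$ the line $h(x)=y$ meets the graph of $f$ in exactly two distinct points $x_{y,1} < x_{y,2}$, so the midpoint $(x_{y,1}+x_{y,2})/2$ is a well-defined real number and $S_f$ is genuinely a curve parametrized by $y$.

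Next I would establish the limit at $y^*$. Since $f$ is strictly convex it has a unique global minimizer $x^*$, and by Lemma \ref{Ilmiolemma} it is strictly decreasing to the left of $x^*$ and strictly increasing to the right, so $x_{y,1} < x^* < x_{y,2}$ for every $y>y^*$. As $y$ decreases to $y^*$, the left branch $x_{y,1}$ increases toward $x^*$ and stays bounded above by $x^*$, while the right branch $x_{y,2}$ decreases and stays bounded below by $x^*$; hence the monotone limits $\ell_1 \le x^* \le \ell_2$ exist. By continuity of $f$ (guaranteed by Proposition \ref{proposizionemiacorc}) one gets $f(\ell_1)=f(\ell_2)=y^*$, and since $x^*$ is the unique minimizer this forces $\ell_1=\ell_2=x^*$. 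Consequently the midpoint tends to $x^*$ as $y\to y^{*+}$, which is exactly what is needed to extend $S_f$ continuously to $[y^*,+\infty)$ by assigning the value $x^*$ at $y=y^*$, so that $(x^*,y^*)$ joins the (extended) graph $G(S_f)$.

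Finally, for the intersection count I would argue directly. A point $(\bar{x},\bar{y})$ lies on the graph of $f$ precisely when $\bar{y}=f(\bar{x})$, and it lies on $G(S_f)$ precisely when $\bar{x}=(x_{\bar{y},1}+x_{\bar{y},2})/2$. If $\bar{y}>y^*$, then the condition $f(\bar{x})=\bar{y}$ forces $\bar{x}\in\{x_{\bar{y},1},x_{\bar{y},2}\}$; but these two points are distinct, so their midpoint lies strictly between them and equals neither, and no intersection can occur above the minimum level. If $\bar{y}=y^*$, the only point of the graph of $f$ at that height is $(x^*,y^*)$, which by the previous paragraph is exactly the extended value of $S_f$. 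Therefore the two curves meet in exactly one point, the minimum point.

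The main obstacle is the limit step: one must rule out the two branches converging to distinct values, and this is precisely where strict convexity (equivalently, uniqueness of the minimizer) is essential. Mere convexity would permit a flat bottom and hence a whole segment of minimizers, which would break both the continuous extension and the single-intersection conclusion — so the use of strict convexity should be flagged carefully rather than glossed over.
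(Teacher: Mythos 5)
Your proposal is correct, and its skeleton matches the paper's: extend $S_f$ to $y=y^*$ by showing the midpoints converge to $(x^*,y^*)$, then conclude the single-intersection claim. But your version is substantially tighter in two places where the paper is loose. First, the paper justifies the extension by saying ``if we let $x^1_y\to x^*$ and $x^2_y\to x^*$ then $y\to y^*$'' --- which is the converse of what is actually needed; you prove the right direction, namely that $y\to y^{*+}$ forces both branches to converge to $x^*$, via monotonicity of $x_{y,1}$ and $x_{y,2}$ in $y$, existence of the monotone limits $\ell_1\le x^*\le\ell_2$, continuity giving $f(\ell_1)=f(\ell_2)=y^*$, and uniqueness of the minimizer collapsing $\ell_1=\ell_2=x^*$. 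Second, the paper merely asserts that after adding the point there is exactly one intersection, whereas you give the actual argument: at any level $\bar y>y^*$ a common point would have to be simultaneously one of the two (distinct) solutions of $f(x)=\bar y$ and their midpoint, which is impossible, and at level $y^*$ the unique graph point $(x^*,y^*)$ coincides with the extended value of $S_f$. Your closing remark about where strict convexity enters (ruling out a flat bottom, hence $\ell_1\neq\ell_2$) correctly identifies the hypothesis doing the real work; the paper uses it implicitly in the same spot. So: same route, but your write-up fills genuine gaps in the published proof rather than introducing a new method.
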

\begin{proof}
    Let $x^*$ be the minimum point of $f$, then $h(x)=y^*$ is tangent to $f(x)$ at $(x^*,y^*)$. If we add that point to $G(S_f)$ we obtain that $S_f$ intersects $f$ at exactly one point. This is possible since if we let $x^1_y\to x^*$ and $x^2_y \to x^*$ then $y\to y^*$ because $f$ is continuous and strictly convex. As a consequence
    \[ \Bigl(\frac{x^1_y+x^2_y}{2},y\Bigr) \to (x^*,y^*),\]
    hence we can extend $S_f$ including the point where $f(x)$ attains its minimum.
\end{proof} 
\begin{remark}
    As already pointed out, Definition \ref{definizionesimm} and Proposition \ref{proposizmia} can be generalized to a function that is strictly convex in a neighborhood of the minimum point.
\end{remark}
\noindent
It is important to underline that $G(S_f)$ lies in the epigraph of $f(x)$ and cannot be extended outside it. This is because $S_f$ is defined by taking the middle point of $x_{y,1},x_{y,2}$ with $y\in [y^*,+\infty)$. \\
\section{Formalization of the algorithm}

Since an explicit expression of $S_f$ can hardly be determined, the idea is to use the Lemma \ref{lemma_Sf} to tackle the problem: we start by picking a certain value $y$ and by solving  $f(x)=y$. There a considerable number of methods to solve this problem. This is equivalent to solve $f(x)-y=0$, and so it is possible to apply one of the root-finding algorithms such as bisection, Newton's method, Brent's method. As a consequence, the efficiency of this method lies in the efficiency of the root-finding algorithm used. Obviously, if $y=y^*$ we are done, otherwise we know that $y^*< y$ and $x^* \in (x_{y,1},x_{y,2})$. Then we consider $\epsilon y$, where $\epsilon \in (0,1)$ and we solve $f(x)=\epsilon y$. If the problem has solution then we obtain  $x^* \in (x_{\epsilon y,1},x_{\epsilon y,2}$) which is a smaller interval than $(x_{y,1},x_{y,2})$. By proceeding in this way, it is possible to solve 
\[\min_{x \in \Omega} f(x). \]
To be more precise, the method is able to find an $x$ such that $|x-x^*|<\delta$ for any $\delta>0$.
If $f(x)=y$ has no solution, we still obtain a valuable information since we can deduce that $y^*>y$.
\begin{figure}[!h] \label{convexfuncbehav}
\centering
\subfloat{\includegraphics[scale=0.36]{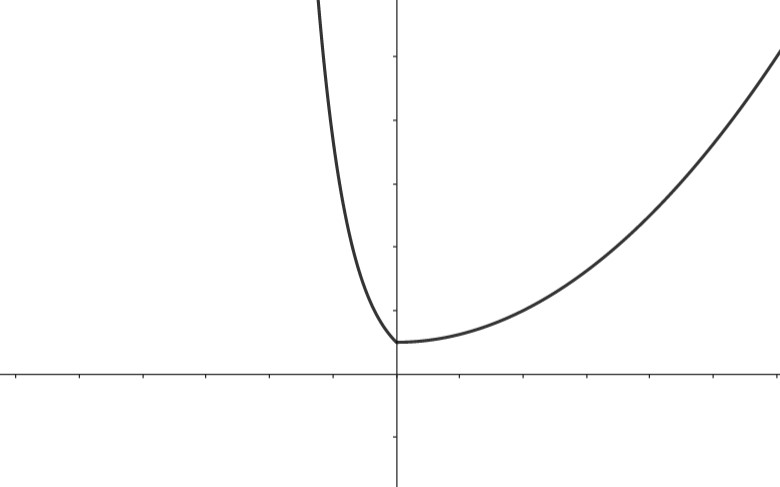}} \quad
\subfloat{\includegraphics[scale=0.36]{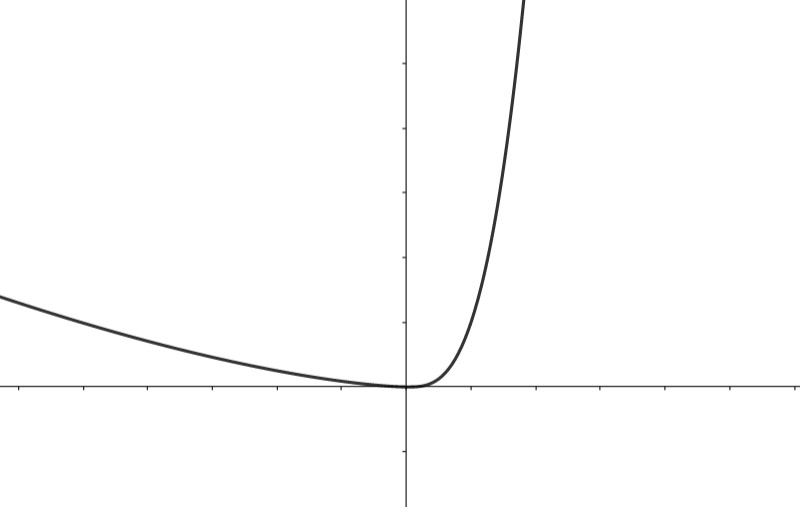}}
\caption{Esempio di due funzioni coercive}
\end{figure} \\ \noindent
The above picture presents two example of strictly convex coercive functions with different behaviour:
\[ f_1(x) = \begin{cases}
    e^{-x}  & \text{if $x<0$,}\\
    \frac{1}{16}x^2+1 & \text{if $x\geq 0$}
\end{cases} \quad
f_2(x) = \begin{cases}
    \frac{1}{8}x^{3/2}  & \text{if $x<0$,}\\
    \frac{1}{4}x^3 & \text{if $x\geq 0$}
\end{cases} 
\]
\begin{remark}
 i) It is interesting that the method presented in the upcoming Algorithm $1$ works without the assumption of differentiability of the function. Hence, it can be applied in cases where the gradient of the object function cannot be calculated. \vskip2mm \noindent
 ii) The two graphs in figure $9$ also highlight an interesting aspect of this class of function: the middle point at a certain height is influenced by the intensity of the convexity. For instance, in the first case, the function $f(x)=e^{-x}$ reaches high values faster than the function $f(x)=1/16x^2+1$. As a result, the point $(x_{y,1}+x_{y,2})/2$ will be shifted towards the right. This remark is important in computational terms. Instead of finding the two solutions of a sequence of problems of the form 
\[f(x)-y_n=0 \quad y_n \to y^*, y_n\in (y^*,+\infty),\]
one can take advantage from the different behaviour of the two tails. We can compute the two solutions $x_{y_n,1},x_{y_n,2}$ and for a fixed number of steps $h \in \mathbb{N}$, only compute $x_{y_n+i,2}$ for $i=1,\ldots,h-1$. This is because $x_{y_n+i,1}$ do not differ that much from $x_{y_n,1}$ and so they do not produce an appreciable reduction in the interval. After $h$ steps, we find the two solutions $x_{y_{n+h},1}$ and $x_{y_{n+h},2}$ and evaluate $|x_{y_{n+h},1}-x_{y_{n+h},2}|$.
\end{remark} \noindent
\begin{figure}[!h]
    \centering
    \includegraphics[scale=0.44]{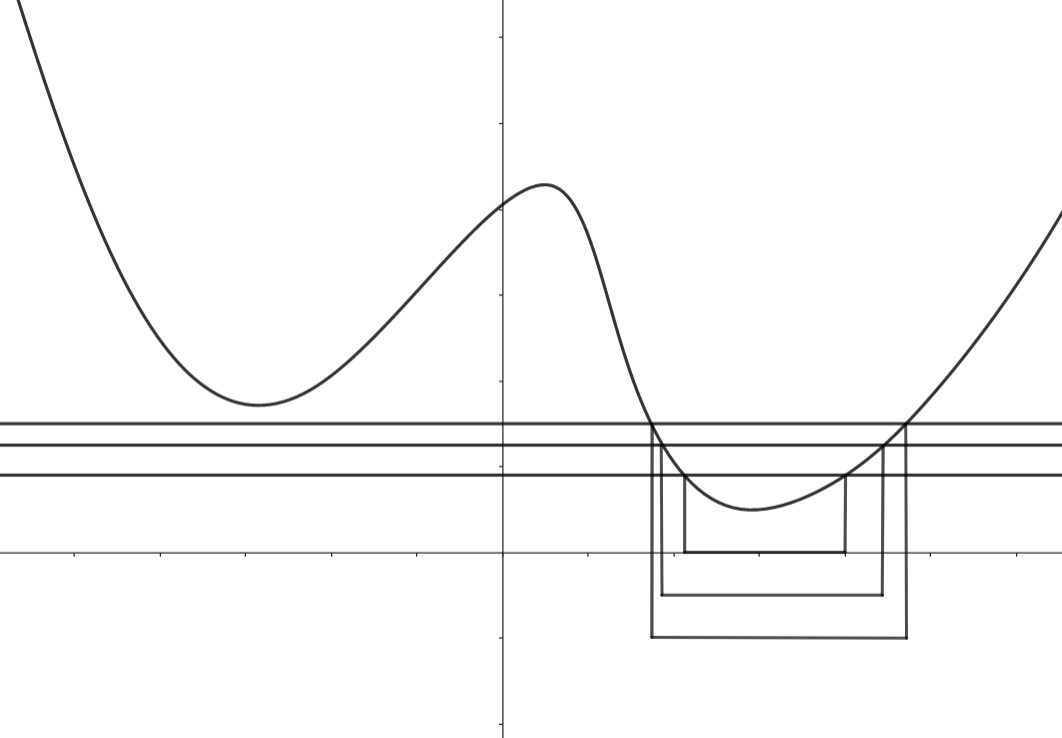}
    \caption{Visual interpretation of the method. It is underlined the reduction of the interval where the minimum lies. }
\end{figure} \\
\noindent
In light of Proposition \ref{proposizmia} and the geometric idea in Lemma \ref{lemma_Sf}, we are now ready to formalize the algorithm to find the minimum point. This algorithm works with coercive strictly convex functions or with functions that are strictly convex in a neighborhood of $x^*$. In the latter case we require to be sufficiently close to the minimum point.
We denote with $y^*$ the minimum of the function and $x^*$ the associated value.  
\begin{algorithm}[!h]
\caption{}
\begin{algorithmic}
    \STATE Given $\epsilon>0$, $\delta>0$ with $\delta \in (0,1)$, $\tilde{y}<y^*$ and 
     $y>y^*$;
     \STATE Compute $x_1,x_2$ such that $f(x_1)=f(x_2)=y$;
     \STATE Compute $|x_2-x_1|$, where $x^* \in (x_2,x_1);$  
    \WHILE {$|x_2-x_1|>\delta$} 
        \STATE $temp = (y+\tilde{y})/2$;
         Compute $x_1,x_2$ such that $f(x_1)=f(x_2)=temp$;
        \IF{$\nexists\,(x_1,x_2)$}
        \STATE $\tilde{y}=temp$;
        \ELSE
        \STATE $y=temp$;
        \ENDIF
    \ENDWHILE
 \RETURN $x_1,x_2$.

\end{algorithmic}
\end{algorithm} \vskip2mm \noindent
\textbf{Proof of correctness:}
Let us denote with $u_n$ the sequence of the upper bounds for which there exist $x_1,x_2$ and with $l_n$ the sequence of the lower bounds. At the first step, we have $u_1=y$, $l_1=\tilde{y}$ and $y^* \in (u_1,l_1)$. In the next step with end up with $u_2=u_1$ and $l_2=(u_1+l_1)/2$ or $u_2=(u_1+l_1)/2$ and $l_2=l_1$. The value $y^*$ lies in $(l_2,u_2)$ which is half of the previous one. It is immediate to conclude, recalling the bisection method, that $u_n\to y^*$ and $l_n \to y^*$ for $n\to \infty$, as a consequence we can find $x_1,x_2$ such that $|x_1-x_2|<\delta$. \\
\begin{remark}
i) The stop criterion requires a tolerance $\delta$: we can set $\delta=10^{-6}$, it depends on the algorithm we use to find the two solutions and on the precision we want to achieve. It is important to underline that if $f(x)$ is strictly convex and coercive, then $y$ can be arbitrary, otherwise we need $y$ to be sufficiently close to $y^*$. 
\end{remark} 
\noindent
\section{Generalization to a wider class of functions}
The natural question is if this idea can be generalized to a wider class of function. In other words, provided sufficient regularity of a function $f$ and given $x^*$ a strong local minimum (otherwise there is no hope to have the "v-shape" at least in a neighborhood of $x^*)$, does $f$ behave locally as coercive, strictly convex function? If the answer is positive, then we can use the previous algorithm to find the value of the minimum to a wider class of functions. Unfortunately, this is not true as shown by the following counterexample. 
\begin{example}
    Let us consider the following function:

\[ f(x) = \begin{cases}
    x^2(\sin{\frac{1}{x}}+1)   & \text{if $x\neq 0$,}\\
    0 & \text{if $x=0$.}
\end{cases}  
\]
This function is continuous and attains its global minimum at $x=0$ because $x^2\geq 0$ and $\sin{\frac{1}{x}}+1 \geq 0$. However, the function is not convex in a neighborhood of $x=0$ due to the oscillation of $\sin{\frac{1}{x}}$.

\begin{figure}[!h]
    \centering
    \includegraphics[scale=0.8]{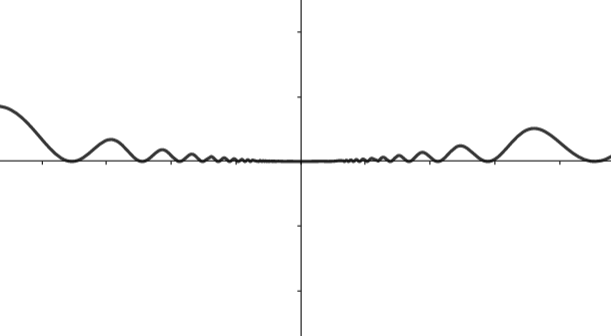}
    \caption{The oscillation of $f(x)$ prevents the function from being convex near $x=0$}
\end{figure}
\noindent
\end{example} \noindent
The problem persists even though we consider smooth function. 
\begin{example}
Let us consider $\phi(x)$ a smooth function with support $[0,1]$. We also require that $\phi(x)\geq 0$. Therefore, the function 
\[\psi(x)=\phi(x)\sin^2{\frac{1}{x}}\]
is smooth and $\psi(x)\geq 0$. Now since each term is greater or equal than zero, the minimum is $0$. At $x=0$ we have a local minimum because $\psi(0)=0$, but the function in each neighborhood of $x=0$ cannot be convex. Indeed, among the zeros of the function we have the points of the form 
\[x=\frac{1}{k\pi} \quad k\in \mathbb{Z}\] 
and 
\[\lim_{k\to +\infty} \frac{1}{k\pi}=0.\]
Therefore, in each neighborhood of zero, the function attains the value $0$ infinitely many times, which is in contrast with the definition of convexity. In other terms, if we focus our attention in the interval 
\[ \Bigl[\frac{1}{k\pi}, \frac{1}{(k+1)\pi}\Bigr]\]
for a proper $k$, here the function is positive and zero on the boundaries, therefore the segment connecting the points
$(\frac{1}{k\pi},0)$ and $(\frac{1}{(k+1)\pi},0)$ lies under the graph.
\end{example}
\noindent
The problem seems to be connected with the oscillation of the function $\sin(x)$. However, this is not true: in the following example we fail to have convexity without this term. 
\begin{example}
The function $f(x)=\sqrt{|x|}$ has a global minimum at $x=0$ but in each neighborhood of $x=0$ the function is neither convex nor concave. In this case the problem is the non existence of the derivative at $x=0$, and 
\[\lim_{x \to 0^+} \frac{d}{dx} f(x)=+\infty, \quad \lim_{x \to 0^-} \frac{d}{dx} f(x)=-\infty
\]
which prevents local convexity. 
\end{example} \noindent
The previous examples are very important because they highlight the way we can generalize the method presented in Algorithm $1$. We have to avoid terms of oscillation and the requirement of the strict convexity it is not necessary. We ask $f(x)$ to be continuous, locally coercive, to work in a proper neighborhood of the strong local minimum point $x^*$, and to have a change of monotonicity at most a finite number of times. As a consequence, from the continuity in this neighborhood, the function $f$ intersects the horizontal line $h(x)=y$ a finite number of times. Combining all the properties, we can choose the closest intersection $x_{y,i}$ for $i=1,2$ to $x^*$, and therefore proceed with the algorithm. This class of functions contains the strictly convex functions, in view of Proposition \ref{proposizionemiacorc} and Lemma \ref{Ilmiolemma}. We therefore conclude that we have extended further the class of functions to which the above algorithm can be applied.

\begin{corollary}
    Let $f(x)$ a continuous function, locally coercive and $x^*$ a strong local minimum point. If the function has a change of monotonicity at most a finite number of times, then Algorithm $1$ converges to $x^*$. 
\end{corollary}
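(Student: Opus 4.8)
The plan is to reduce the statement to the bisection argument already used in the proof of correctness of Algorithm $1$, the only genuinely new difficulty being that a horizontal line may now meet the graph of $f$ in many points. First I would fix the working neighborhood. Since $x^*$ is a strong local minimum, there is $\eta>0$ with $f(x)>f(x^*)=y^*$ for all $x\in(x^*-\eta,x^*+\eta)\setminus\{x^*\}$. Because $f$ changes monotonicity only finitely many times, after shrinking $\eta$ the function is monotone on each side of $x^*$; combined with the strict inequality $f(x)>y^*$ this forces $f$ to be strictly decreasing on $(x^*-\eta,x^*)$ and strictly increasing on $(x^*,x^*+\eta)$. Intersecting with the interval $(a,b)$ supplied by local coercivity, I obtain a neighborhood on which $f$ is genuinely ``v-shaped'', with common boundary value $L>y^*$.

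Next I would describe the intersection structure on this neighborhood. For every level $y\in(y^*,L)$, continuity together with the finiteness of monotonicity changes guarantees that $f(x)=y$ has only finitely many solutions there; selecting the two closest to $x^*$, the v-shape guarantees they straddle the minimum, $x_{y,1}<x^*<x_{y,2}$, exactly as in the local version of Proposition \ref{proposizmia}. The key point to verify is the squeeze: as $y\downarrow y^*$, strict monotonicity of $f$ on each side of $x^*$ makes $x_{y,1}$ increase to $x^*$ and $x_{y,2}$ decrease to $x^*$, so both selected intersections converge to $x^*$. This is the local analog of Lemma \ref{lemma_Sf}.

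With this in hand, convergence follows the proof of correctness already given. I would first note that the ``no solution'' test correctly detects when $temp<y^*$: since $y^*$ is the minimum value of $f$ on the chosen neighborhood, $f(x)=temp$ has no solution there precisely when $temp<y^*$, whereas every $temp\in(y^*,L)$ admits the two closest intersections of the previous paragraph. Hence the bisection on the $y$-level produces lower bounds $l_n\uparrow y^*$ and upper bounds $u_n\downarrow y^*$ exactly as before, and by the squeeze the associated points $x_{u_n,1},x_{u_n,2}$ converge to $x^*$. Therefore the stopping condition $|x_1-x_2|<\delta$ is eventually satisfied and the returned pair lies within $\delta$ of $x^*$.

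The main obstacle is the second paragraph, namely the multiplicity of intersections. In the strictly convex case Lemma \ref{Ilmiolemma} yields a single monotonicity change and hence exactly two intersections, so no selection is needed; here I must justify that picking the two closest intersections is well defined for every admissible $y$, that no far-away intersection is ever selected as the algorithm proceeds, and that the v-shape (hence the straddling property) persists all the way down to $y^*$. Controlling this requires choosing $\eta$ small enough that the strictly monotone branches around $x^*$ are not contaminated by the other monotonicity changes of $f$, which is exactly where the finiteness hypothesis is essential.
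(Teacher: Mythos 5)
Your proposal follows essentially the same route as the paper: the paper's own justification (the informal paragraph preceding the corollary, together with the bisection argument in the proof of correctness of Algorithm 1) likewise restricts attention to a neighborhood of $x^*$, uses the finiteness of monotonicity changes and continuity to conclude that each horizontal line meets the graph finitely many times, selects the two intersections $x_{y,1},x_{y,2}$ closest to $x^*$, and then runs the level-bisection exactly as before. Your write-up is in fact more detailed than the paper's argument --- in particular the explicit squeeze $x_{y,1},x_{y,2}\to x^*$ as $y\downarrow y^*$ and the discussion of the ``no solution'' test are left implicit in the paper --- so it is a faithful, slightly strengthened version of the same proof.
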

Unfortunately, if we remove the hypothesis of the change of monotonicity, we may encounter functions with anomalous behavior that do not allow us to use the algorithm. Indeed, in order to use it, there must exist $B(x^*,\delta_y)$ and $\bar{y}$ such that
\[ \#\{(h(x)=y) \cap graph(f)\}=2 \quad \forall \,y\,\,|\,\,m=f(x^*)<y<\bar{y}.\]
\section{The construction of the anomalous test function}

Let us construct the counter-example. Let us fix $x^*\in \mathbb{R}$ and fix $f(x^*)=m>0$. We aim to construct a function $f$, with minimum at $x^*$, such that for a given $\epsilon>0$ we want to attain infinitely many times the values
\[m+\frac{\epsilon}{n} \quad n\in \mathbb{N}, n\,\,\text{odd}.\]
To this aim, fix $\delta>0$ and consider the following disjoint intervals:
\[ [x^*-\delta, x^*-\frac{\delta}{2}], [ x^*-\frac{\delta}{3}, x^*-\frac{\delta}{4}],\ldots, [x^*-\frac{\delta}{n}, x^*-\frac{\delta}{n+1}],\ldots \quad n\in \mathbb{N}, n\,\,\text{odd}.  \]
Our aim is to let the function achieve infinitely many times the value $m+\frac{\epsilon}{n}$ in $[ x^*-\frac{\delta}{n}, x^*-\frac{\delta}{n+1}]$, we also require that 
\[ f\Bigl(x^*-\frac{\delta}{n}\Bigr)=m+\frac{\epsilon}{n} \] 
for $n$ odd. 
Let us so work in $[x^*-\frac{\delta}{n}, x^*-\frac{\delta}{n+1}]$ for $n$ odd: here we want an oscillatory behavior of the function. Let us start by considering the function 
\[ h(x) = \begin{cases}
    x^2(\sin{\frac{1}{x}}+1)   & \text{if $x\neq 0$,}\\
    0 & \text{if $x=0$.}
\end{cases}  
\]
We want to adapt this function in $[x^*-\frac{\delta}{n}, x^*-\frac{\delta}{n+1}]$ according to our requests. The function must attain the value $m+\frac{\epsilon}{n}$ infinitely many times. As a consequence, we start by translating the function $h(x)$ by $m+\frac{\epsilon}{n}$. Secondly, we need to locate the zeros of $h(x)$ in the interval $[x^*-\frac{\delta}{n}, x^*-\frac{\delta}{n+1}]$. We therefore shift the function as follows:
\[ h_n(x) = \begin{cases}
    (x-(x^*-\frac{\delta}{n}))^2(\sin{\frac{1}{x-(x^*-\frac{\delta}{n})}}+1)+m+\frac{\epsilon}{n}  & \text{if $x\in (x^*-\frac{\delta}{n}, x^*-\frac{\delta}{n+1}]$,}\\
    m+\frac{\epsilon}{n} & \text{if $x=x^*-\frac{\delta}{n}$,} \\
     0 & \text{alternatively.}
\end{cases}  
\]
Furthermore, since $m=f(x^*)$ will be our point of strong local minimum, we have to control that the function is greater than $m$ at every $x\neq x^*$. However, from
\[
(x-(x^*-\frac{\delta}{n}))^2(\sin{\frac{1}{x-(x^*-\frac{\delta}{n})}}+1)\geq 0
\]
we conclude that
\[ 
h_n(x)\geq m+\frac{\epsilon}{n} \quad \forall x\in  [x^*-\frac{\delta}{n}, x^*-\frac{\delta}{n+1}]
\]
In each interval of the form $[ x^*-\frac{\delta}{n}, x^*-\frac{\delta}{n+1}]$ for $n$ even, we consider the straight line that connects 
\[  (x^*-\frac{\delta}{n}, h_{n-1}(x^*-\frac{\delta}{n})), \quad (x^*-\frac{\delta}{n+1}, m+\frac{\epsilon}{n+1}) \]
and so we have for $n$ even, the function 
\[ k_n(x)=\begin{cases}  h_{n-1}(x^*-\frac{\delta}{n})+\frac{m+\frac{\epsilon}{n+1}-h_{n-1}(x^*-\frac{\delta}{n})}{\frac{\delta}{n^2+n}}(x-(x^*-\frac{\delta}{n})) & x\in [ x^*-\frac{\delta}{n}, x^*-\frac{\delta}{n+1}], \\
 0 & \text{alternatively.}
\end{cases}  
\]
Now, we are almost done. We consider the interval $[x^*-\delta,x^*)$, here we combine the various functions for $n$ even and odd. Let us denote with $\tilde{h}(x)$ the function 
\[\tilde{h}(x)=\Bigl(\sum_{n\,\, \text{odd}} h_n(x)+ \sum_{n\,\, \text{even}} k_n(x)\Bigr) \chi [x^*-\delta,x^*).\]
By the previous construction, $\tilde{h}(x)$ is continuous in $[x^*-\delta,x^*)$, $\tilde{h}(x)>m$ and takes the values $m+\frac{\epsilon}{n}$ for $n$ odd, infinitely many times. 
We complete $\tilde{h}(x)$ with its symmetry with respect to the $x=x^*$ axis. To conclude, we consider the following function
\[ f(x)=\begin{cases} 
\tilde{h}(x) & x\in [x^*-\delta,x^*),\\
\tilde{h}(2x^*-x) & x\in (x^*, x^*+\delta] \\
m & x=x^*.
\end{cases}  
\]
that is a continuous and locally coercive function with a strong local minimum at $x^*$, for which we cannot use the algorithm.

\section*{Acknowledgements}
I would like to express my gratitude to Professors Alessia Ascanelli and Valeria Ruggiero from the University of Ferrara for their guidance and support throughout this research. Their expert advice and insights have greatly contributed to this work.

\section*{Bibliography}


\begin{thebibliography}{}
\bibitem{Ruggero} P. Acquistapace , \textit{Appunti di analisi convessa}, Università di Pisa, 2017
\bibitem{Ottimizz} V. Ruggiero , \textit{Note di Ottimizzazione numerica e applicazioni all’elaborazione di dati}, Università degli studi di Ferrara, 2022
\bibitem{Librone} S. Boyd, L. Vandenberghe , \textit{Convex optimization}, Cambridge University Press, 2004
\bibitem{subderivative} S. Boyd, J. Duchi, M. Pilanci, and L. Vandenberghe , \textit{Subgradients}, Stanford University, 2021/2022
\bibitem{altrolibrone} R.T. Rockafellar , \textit{Convex Analysis}, Princeton University Press, 1970
\bibitem{conjugate} D.P Bertsekas , \textit{Convex Analysis and optmization}, MASS Instutite of Technology, 2012
\bibitem{KKT proof} R. H.W. Hoppe, \textit{Optimization I. Chapter 2: Theory of Constrained Optimization}, University of Houston, 2006
\bibitem{Shortest path problem eng} H.T. Cormen, C.E. Leiserson; R.L. Rivest, S. Clifford  \textit{ Introduction to Algorithms} (Second ed.). MIT Press and McGraw–Hill, 2001
\bibitem{} A. W. Roberts, D. E. Varberg, \textit{Convex Functions}, Academic Press, 1973


\end{thebibliography}
\end{document}